\theoremstyle{plain}
\newtheorem{theorem}{Theorem}
\newtheorem{proposition}{Proposition}
\theoremstyle{definition}
\newtheorem{definition}{Definition}
\begin{document}
\title{On the matching arrangement of a graph,\\
improper weight function problem and its application.}
\author{A.~I.~Bolotnikov,~A.~A.~Irmatov}
\date{}
\maketitle

\begin{abstract} 
This article presents examples of an application of the finite field method for the computation of the characteristic polynomial of the matching arrangement of a graph.  Weight functions on edges of a graph with weights from a finite field are divided into proper and improper functions in connection with proper colorings of vertices of the matching polytope of a graph.  An improper weight function problem is introduced,  a proof of its NP-completeness is presented, and a knapsack-like public key cryptosystem is constructed based on the improper weight function problem.
\end{abstract}

\section{Introduction}
In \cite{bol_first}, the matching arrangement of hyperplanes $MA(G)$
for a graph $G(V,E), |E| = n$ without loops and multiple edges was
introduced as the arrangement in $\mathbb{R}^n$ that contains a
hyperplane with the equation  $x_{i_1} -
x_{i_2}+...+(-1)^{k+1}x_{i_k} = 0$ for every sequence of edges
$(e_{i_1},..., e_{i_k})$ that form either a simple path of arbitrary
length or a simple cycle of even length in $G$. The matching
arrangement has the following property: if you arbitrarily select
two vectors from the same region of the arrangement, use each vector
in turn as a weight vector on the edges of graph $G$, and find the
maximum-weight matching for each vector, then the maximum matching
will be the same  for both vectors \cite{bol_first}. In addition,
there is a one-to-one correspondence between regions of the matching
arrangement and LP-orientations of the matching polytope, a special
polytope used in solving the maximum matching problem using linear
programming methods \cite{bol_second}.

In \cite{zasl}, Thomas Zaslavsky showed that  the number of regions
in a hyperplane arrangement depends on the partially ordered set of
intersections of hyperplanes in the arrangement. Zaslavsky expressed
this number through the characteristic polynomial of the hyperplane
arrangement. Thus, if we calculate the characteristic polynomial of
a hyperplane arrangement, we can find its number of regions.

The results on the properties  of the characteristic polynomial of a
matching arrangement, as well as on calculating the characteristic
polynomial for the case when graph $G$ is a tree, are also presented
in \cite{bol_first}. In this paper, we will consider examples of
calculating the characteristic polynomial of a matching arrangement
using the finite field method. This method was developed by
Athansiadis \cite{atn} based on the results of Crapo and Rota
\cite{crapo-rota}, and allows us to calculate the characteristic
polynomial of a hyperplane arrangement if all equations of the
hyperplanes in the arrangement have rational coefficients. The
method is based on the following result \cite[Theorem 2.1]{atn}:

{\itshape Let $A$ be any subspace arrangement in  $\mathbb{R}^n$
defined over the integers and q be a large enough prime number.
Then, $\chi_{A}(q) = |F_{q}^{n} \setminus \cup A|$. Equivalently,
identifying $F_{q}^{n}$ with $\{0, 1, ..., q-1\}^{n}$, $\chi_A(q)$
is the number of points with integer coordinates in the cube $\{0,
1, ..., q-1\}^{n}$ which do not satisfy mod $q$ the defining
equations of any of the subspaces in $A$. }

The second section of this paper presents  results on calculating
the characteristic polynomials of the matching arrangement for some
families of graphs using the finite field method. The definitions of
a hyperplane arrangement, an intersection lattice and a
characteristic polynomial that were used in the second section can
be found in \cite{stanley}, which also contains the proof of the
deletion-restriction lemma and the description of the finite field
method that uses powers of prime.

In the third section, we introduce  the definition of a proper and
improper weight function, state the improper weight function
problem, prove its NP-completeness, and describe the relationship
between proper weight functions and the matching arrangement.  The
NP-completeness is proved with the use of 3-satisfiability, or 3-SAT
problem. All the necessary definitions regarding the 3-SAT problem,
as well as proof of its NP-completeness, can be found in
\cite{garey-johnson}.

The fourth section of the paper provides a theoretical description
of the Bolotnikov--Irmatov cryptosystem that uses the
NP-completeness of the improper weight function problem.

The results from the second and the third sections are obtained by
A.I.~Bolotnikov. The cryptosystem in the fourth section is developed
by A.I.~Bolotnikov and A.A.~Irmatov.

\section{The characteristic polynomial of the matching arrangement for some classes of graphs}

\begin{definition}
Let $A$ be an arrangement of hyperplanes in $\mathbb{R}^n$.  A
partially ordered set (or poset) of intersections $L(A)$ is defined
the following way:  elements of $L(A)$ are the space $\mathbb{R}^n$
itself, and all nonempty intersections of hyperplanes in $A$, and $x
\leq y$ in $L(A)$, if $x \supseteq y $.
\end{definition}
Each element $x$ of the set $L(A)$ is  an affine subspace of
$\mathbb{R}^n$. Let $dim(x)$ be the dimension of $x$. If all
hyperplanes of $A$ intersect in one point, then $L(A)$ is a lattice
and is referred to as a lattice of intersections instead.

\begin{definition}
\textit{M\"{o}bius function} $\mu: P \times P \rightarrow
\mathbb{Z}$ for a partially ordered set $P$ is defined by following
conditions:
\begin{enumerate}
    \item $\mu (x,x) = 1$ $ \forall x \in P$.
    \item $\mu (x,y) = -\sum_{x \leq z < y} \mu (x,z)$ $ \forall x<y \in P $.
    \item $\mu (x,y) = 0$, if $x$ and $y$ are not comparable.
\end{enumerate}
\end{definition}
If $P$ has a minimal element $\hat 0$,  then we write $\mu(x) := \mu
(\hat{0}, x)$.
\begin{definition}
\textit{A characteristic polynomial} of a hyperplane arrangement A
is defined by the following formula
\[ \chi_{A} (t) = \sum_{x \in L(A)} \mu(x)t^{dim(x)}.\]
\end{definition}

\begin{proposition} Let $G$ be a graph that consists of one cycle  of length $n$, and let $\chi_{MA(G)}(x)$ be
the characteristic polynomial of the matching arrangement $MA(G)$.
Then,
\begin{itemize}
\item if $n$ is even, then $\chi_{MA(G)}(x) = (x-1)(x-n)^{n-1}$.
\item if $n$ is odd, then  $\chi_{MA(G)}(x) = (x-1)(x-3)(x-5)...(x-(2n-3))(x-(n-1))$.
\end{itemize}
\end{proposition}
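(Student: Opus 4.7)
The plan is to apply the finite field method from the introduction: for a sufficiently large prime $q$, $\chi_{MA(C_n)}(q)$ equals the number of tuples $(x_1,\ldots,x_n)\in\mathbb F_q^n$ that satisfy none of the defining equations of $MA(C_n)$. Since $\chi_{MA(C_n)}(x)$ is a polynomial of degree $n$, it is determined by its values on any infinite set of primes. Label the edges of $C_n$ cyclically as $e_1,\ldots,e_n$; the hyperplanes are the arcs $e_a,e_{a+1},\ldots,e_{a+k-1}$ for each length $k\in\{1,\ldots,n-1\}$ (there are $n$ such arcs per length, non-wrapping or wrapping), plus the even-cycle hyperplane $x_1-x_2+\cdots-x_n=0$ when $n$ is even. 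The uniting device is the partial-sum substitution
\[
 S_i=x_1-x_2+x_3-\cdots+(-1)^{i+1}x_i,\quad i=0,1,\ldots,n,\qquad S_0=0,
\]
which is a linear bijection on $\mathbb F_q^n$.

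A direct sign computation will show: a non-wrapping arc of length $k$ starting at $e_a$ gives the equation $S_{a+k-1}=S_{a-1}$, while a wrapping arc (ending at $e_m$ with $m=a+k-1-n$) gives $S_{a-1}-S_m=S_n$ when $n$ is even and $S_{a-1}+S_m=S_n$ when $n$ is odd; for $n$ even the cycle itself contributes $S_n=0$. Consequently the conditions to be avoided become: for $n$ even, the values $S_0,\ldots,S_n$ are pairwise distinct and $S_j-S_i\ne S_n$ for every $1\le i<j\le n-1$; for $n$ odd, pairwise distinct except that $S_n=S_0=0$ is permitted, together with $S_i+S_j\ne S_n$ for every $1\le i<j\le n-1$. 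The sign bookkeeping for wrapping paths in the odd case is the main technical step, and I expect it to be the principal obstacle; all that follows is combinatorics.

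For $n$ even, I would fix $c=S_n\in\mathbb F_q^*$ ($q-1$ choices) and rescale by $c^{-1}$ to assume $c=1$, reducing the count to sequences $(S_1,\ldots,S_{n-1})$ of distinct elements of $\{2,\ldots,q-1\}\subset\mathbb F_q$ in which no later entry equals one more than an earlier entry. For the underlying $(n-1)$-subset $I\subseteq\{2,\ldots,q-1\}$ with maximal blocks of consecutive integers of sizes $k_1,\ldots,k_r$, the constraint pins the order within each block (reverse of position order) and allows free interleaving between blocks, contributing $\binom{n-1}{k_1,\ldots,k_r}$ orderings; placing $r$ blocks with a gap $\ge 1$ between them on a path of $q-2$ positions admits $\binom{q-n}{r}$ configurations by stars-and-bars. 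Summing over compositions and applying the identity $x^{m}=\sum_r r!\,S(m,r)\binom{x}{r}$, where $S(m,r)$ is the Stirling number of the second kind, with $m=n-1$ and $x=q-n$, yields $(q-n)^{n-1}$ per $c$ and hence $(q-1)(q-n)^{n-1}$ in total.

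For $n$ odd, I would split on whether $S_n=0$. If $S_n=0$, the condition $S_i+S_j\ne 0$ on the distinct nonzero values $S_1,\ldots,S_{n-1}$ forbids any opposite pair $\{a,-a\}$; grouping $\mathbb F_q^*$ into its $(q-1)/2$ such pairs (no fixed points since $q$ is odd) and counting subsets and orderings gives $(q-1)(q-3)\cdots(q-(2n-3))$ tuples. If $S_n=c\ne 0$, the involution $a\leftrightarrow c-a$ on $\mathbb F_q\setminus\{0,c\}$ has one fixed point $c/2$ and $(q-3)/2$ honest orbits; distinguishing whether $c/2$ is used gives $(q-3)(q-5)\cdots(q-(2n-3))(q-n)$ tuples per $c$ after simplification, and $(q-1)(q-3)\cdots(q-(2n-3))(q-n)$ summed over $c\ne 0$. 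Adding the two sub-totals factors as $(q-1)(q-3)\cdots(q-(2n-3))\bigl[1+(q-n)\bigr]=(q-1)(q-3)\cdots(q-(2n-3))(q-(n-1))$, establishing the claimed polynomial.
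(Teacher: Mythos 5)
Your proposal is correct and follows the same strategy as the paper: the finite field method combined with the alternating-partial-sum change of coordinates $x_i\mapsto S_i$ (this is exactly the paper's triangular operator $W$), leading to the same list of forbidden conditions on $(S_1,\ldots,S_n)$ and the same sub-counts. The only divergences are in packaging. In the even case you count orderings block-by-block and sum via $\sum_r r!\,S(m,r)\binom{x}{r}=x^m$, whereas the paper exhibits a direct bijection with weak ordered partitions of $\{1,\ldots,n-1\}$ into $p-n$ blocks; both land on $(q-n)^{n-1}$ per choice of $S_n$. In the odd case you split directly on $S_n=0$ versus $S_n=c\neq 0$, whereas the paper adjoins the full-cycle hyperplane and invokes deletion--restriction; over $\mathbb{F}_q$ that lemma is precisely your case split, and your two sub-totals, $(q-1)(q-3)\cdots(q-(2n-3))$ and $(q-1)(q-3)\cdots(q-(2n-3))(q-n)$, are the paper's $\chi_{S_{\hat H}}(p)$ and $\chi_{S}(p)$ term for term. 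One detail you handle slightly more carefully than the text: the wrapping-arc condition $S_i+S_j\ne S_n$ must be taken only for $i<j$ (so $S_i=S_n/2$ is not itself forbidden), which the paper writes with $i_1\le i_2$ but then counts as if strict.
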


\begin{proof}
Let graph $G$ consist of one cycle of length $n$. We arbitrarily
choose a vertex $A$ in graph $G$ and number the edges clockwise,
counting the edges from $A$.

\begin{figure}[h]
     \centering
     \includegraphics[width=0.3\textwidth]{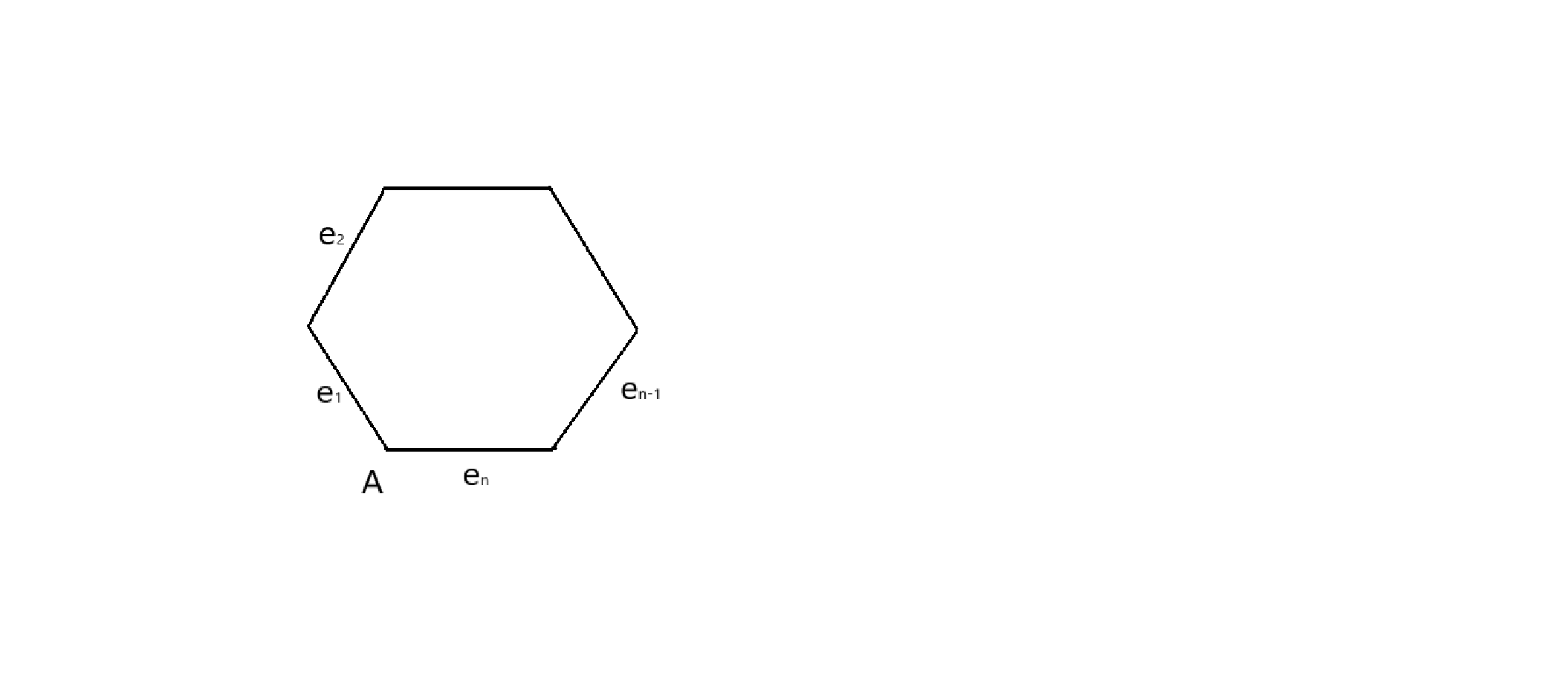}
     \caption{A cycle of length $n$}
     \label{fig:my_label}
 \end{figure}

Let us first consider the case when $n$ is even. Consider  the
linear operator $W: \mathbb{R}^{n} \rightarrow \mathbb{R}^{n}$ with
the following matrix. The main diagonal of the matrix, as well as
the next diagonal to the right, alternate between 1 and $-1$
(starting with 1), and there are 0s in all other places.

\[ \left( \begin{matrix}
1 & 1 & 0 & 0 & 0 & 0  \\
0 & -1 & -1 & 0 & 0 & 0 \\
0 &  0 &  1 & 1 & 0 & 0 \\
0 &  0 &  0 & -1 &-1 & 0\\
0 &  0 &  0 &  0 &  1 & 1 \\
0 &  0 &  0 &  0 &  0 &-1 \\ 
\end{matrix} \right) \]

The vector $ t = (1, -1,...,(-1)^{k+1},0,...,0)$  is the normal
vector to the hyperplane from the arrangement $MA(G)$, which
corresponds to a path of length $k$ leaving $A$ and going clockwise.
The operator $W$ maps this vector to vector  $d_k =
(0,...,0,1,0,...,0)$, in which 1 is at the $k$th position. For an
arbitrary vertex $B$ of graph $G$, the vector $t_B =  (1,
-1,...,(-1)^{r+1},0,...,0)$, where $r$ is the length of the path
from $A$ to $B$ clockwise, is normal to the hyperplane corresponding
to the path from $A$ to $B$ in a clockwise direction. $t_A = (1,
-1,...,1,-1) $ will denote the normal to the hyperplane
corresponding to the entire cycle. Consider an arbitrary path in
graph $G$ that leads clockwise from vertex $B$ to vertex $C$. If $B
= A$, then the vector $t_C$ will be normal to the hyperplane
corresponding to this path. If $C = A$ or the path does not pass
through $A$, then $t_C - t_B$ will be normal to the corresponding
hyperplane. If the path passes through $A$ but does not start or end
there, then $t_A - t_B + t_C$ will be normal to the corresponding
hyperplane. Operator $W$ maps $t_B$ to $d_r$, $t_C - t_B$ to
$d_{k_2} - d_{k_1}$, where $k_2 > k_1$, and $t_A - t_B + t_C$ to
$d_n - d_{k_1} + d_{k_2}$, where $k_1 > k_2$.

Consider the hyperplane arrangement $S$ that consists of the following hyperplanes:
\begin{enumerate}
\item $\forall i, 1\leq i\leq n: x_i = 0 $.
\item $\forall i_1, i_2, 1\leq i_1 \leq i_2 \leq n: x_{i_2} - x_{i_1} = 0$.
\item $\forall i_1, i_2, 1\leq i_1 \leq i_2  < n: x_n - x_{i_2} + x_{i_1} = 0$.
\end{enumerate}

Note that since there is the invertible linear operator $W$ that
maps the set of normal vectors  to hyperplanes of  $MA(G)$ to the
set of normal vectors to hyperplanes of $S$, their intersection
lattices are isomorphic and their characteristic polynomials are
equal. We will compute the characteristic polynomial $\chi_{S}(x)$
using the finite field method. For a sufficiently large prime number
$p$, $\chi_{S}(p)$ is equal to the number of vectors $(x_1,...,x_n),
x_i \in F_p$, that do not satisfy any of the equations:
\begin{enumerate}
\item $\forall i, 1\leq i\leq n: x_i = 0 $.
\item $\forall i_1, i_2, 1\leq i_1 \leq i_2 \leq n: x_{i_2} - x_{i_1} = 0$.
\item $\forall i_1, i_2, 1\leq i_1 \leq i_2  < n: x_n - x_{i_2} + x_{i_1} = 0$.
\end{enumerate}

We will find the number $\chi_{S}(p)$ combinatorially.  First, we
choose a value for $x_n$. This can be done in $p-1$ ways. Let us say
we chose $x_n = k$, $k \neq 0$. Then, the remaining $x_i$ should be
selected so that they do not satisfy any of the following
conditions:
\begin{enumerate}
\item $\forall i, 1\leq i\leq n-1: x_i = 0 $.
\item $\forall i, 1\leq i\leq n-1: x_i = k $.
\item $\forall i_1, i_2, 1\leq i_1 \leq i_2 \leq n-1: x_{i_2} - x_{i_1} = 0$.
\item $\forall i_1, i_2, 1\leq i_1 \leq i_2 \leq n-1: x_{i_2} - x_{i_1} = k$.
\end{enumerate}

Let us choose a weak ordered  partition $\pi = (B_1,..., B_{p-n})$
of the set $\{1,...,n-1\}$ into $p-n$ blocks (``weak'' means that
some of the blocks can be empty), that is $\bigcup B_i =
\{1,...,n-1\}$ and $B_i \cap B_j = \emptyset , i\neq j$. Arrange all
the elements of $F_p$ in a row as follows: $0, k ,2k, ... ,(p-1)k$.
The first two elements of this row are ``locked'': none of the $x_i$
can take these two values. Starting with the third element in this
row, elements of $B_1$ are placed one by one in the descending order
onto the elements of the row without any gaps. Once all elements of
$B_1$ are placed, a gap with the size of one element of the row is
made, and elements of $B_2$ are placed on the following elements of
the row in the same descending order. We keep placing elements of
$B_i$ onto the elements of the row in the descending order and
making one-element gaps when moving from $B_i$ to $B_{i+1}$, until
all elements from $\{1,..., n-1\}$ are placed. 

Now for every $i$, $
0\leq i \leq n-1$, we choose the element of the row, in which we
placed $i$, as $x_i$. Conditions $ x_i = 0 $ and $ x_i = k $ aren't
satisfied for any $i$, because the first two elements of the row
were left ``locked''. Condition $x_{i_2} - x_{i_1} = 0$ is not
satisfied for any $i_1$ and $i_2$, because we put no more, then one
element of $\{1,...,n-1\}$ onto each element of the row. Condition
$0\leq i_1 \leq i_2 \leq n-1: x_{i_2} - x_{i_1} = k$ is not
satisfied for any $i_1 \leq i_2$, because for any $B_i$ the elements
of $B_i$ were placed in the descending order, and there was a
one-element gap between elements of $B_i$ and $B_{i+1}$. Therefore,
a vector $(x_1,..., x_{n-1})$ is constructed for a given partition
$\pi$, and it does not satisfy any of the four conditions.

Conversely, given a vector $(x_1,..., x_{n-1})$ that does not
satisfy any of the four conditions, a placement of $\{1,...,n-1\}$
onto the row $0, k, 2k,...,(p-1)k$ can be recovered, and in turn a
partition $\pi$ can be recovered  from this placement. This means
that the number of vectors that do not satisfy any of the four
conditions is equal to the number of weak ordered partitions of
$\{1,..., n-1\}$ into $p-n$ blocks. This number is equal to the
number of functions $f:[n-1] \rightarrow [p-n]$. There are exactly
$(p-n)^{n-1}$ of such functions. And considering the choice of $x_n$
on the first step, the resulting formula for the characteristic
polynomial of $S$ is $\chi_{S}(p) = (p-1)(p-n)^{n-1}$. Thus, in case
when graph $G$ consists of one cycle of even length,
$\chi_{MA(G)}(x) = (x-1)(x-n)^{n-1}$.

Now let $n$ be odd. Let $L$ be the  following arrangement of
hyperplanes: $L = MA(G)\cup \{H\}$, where $H$ is a hyperplane with
the equation $x_1 - x_2 +...-x_{n-1} + x_n = 0$.

Let the linear operator  $W: \mathbb{R}^{n} \rightarrow
\mathbb{R}^{n}$ be defined similar to the operator in the case of an
even cycle. The main diagonal of the matrix, as well as the next
diagonal to the right, alternate between 1 and $-1$ (starting with
1), and there are 0s in all other places. We will use $d_k$ and
$t_B$ to denote the same vectors as in the case of an even graph,
with $t_A = (1,-1,...,1,-1,1)$ now being a normal vector to $H$. The
representations of normal vectors to hyperplanes of $L$ as linear
combinations of vectors $t_A$, $t_B$, and  $t_C$ are mostly the
same, the only difference is that if the path from $B$ to $C$
passes through $A$ but does not start or end there, then $t_A - t_B
- t_C$ will be normal to the corresponding hyperplane.

Consider the hyperplane arrangement $S$ that consists of the following hyperplanes:
\begin{enumerate}
\item $\forall i, 1\leq i\leq n: x_i = 0 $.
\item $\forall i, 1\leq i_1 \leq i_2 \leq n: x_{i_2} - x_{i_1} = 0$.
\item $\forall i, 1\leq i_1 \leq i_2  < n: x_n - x_{i_2} - x_{i_1} = 0$.
\end{enumerate}

Note that since there is the invertible linear operator  $W$ that
maps the set of normal vectors to hyperplanes of  $L$ to the set of
normal vectors to hyperplanes of $S$, their intersection lattices
are isomorphic and their characteristic polynomials are equal. We
will compute the characteristic polynomial $\chi_{S}(x)$ using the
finite field method. For a sufficiently large prime number $p$,
$\chi_{S}(p)$ is equal to the number of vectors $(x_1,...,x_n), x_i
\in F_p$, that do not satisfy any of the equations:
\begin{enumerate}
\item $\forall i, 1\leq i\leq n: x_i = 0 $.
\item $\forall i, 1\leq i_1 \leq i_2 \leq n: x_{i_2} - x_{i_1} = 0$.
\item $\forall i, 1\leq i_1 \leq i_2 < n: x_n - x_{i_2} - x_{i_1} = 0$.
\end{enumerate}

We will find the number $\chi_{S}(p)$  combinatorially. First, we
choose a value for $x_n$. This can be done in $p-1$ ways. Let us say
we chose $x_n = k$, $k \neq 0$. Then, the remaining $x_i$ should be
selected so that they do not satisfy any of the following
conditions:
\begin{enumerate}
\item $\forall i, 1\leq i\leq n-1: x_i = 0 $.
\item $\forall i, 1\leq i\leq n-1: x_i = k $.
\item $\forall i_1, i_2, 1\leq i_1 \leq i_2 \leq n-1: x_{i_2} - x_{i_1} = 0$.
\item $\forall i_1, i_2, 1\leq i_1 \leq i_2 \leq n-1: x_{i_2} + x_{i_1} = k$.
\end{enumerate}

Let us consider two possible cases.

Case 1: $\forall i, 0 \leq i \leq n-1,~x_i \neq k/2$. In this case,
we can consecutively choose values for all $x_i$. There are $p-3$
possible values for $x_1$. After a value is chosen for $x_1$, there
are $k-5$ possible values for $x_2$: if value $t$ is chosen for
$x_1$, then we cannot choose $t$ and $k-t$ for $x_2$. After a value
is chosen for $x_2$, there are $p-7$ possible values for $x_3$, and
so on. In total, in this case there are
$(p-3)(p-5)(p-7)...(p-(2n-1))$ possible ways to choose values for
$x_1,...,x_{n-1}$.

Case 2: for some $i$ $x_i = k/2$. In this case, we start with
choosing in $n-1$ ways, which of $x_i$ is equal to $k/2$, and then
for the rest of  $x_i$ the values are chosen the same way, as in
Case 1. Therefore, in this case there are
$(n-1)(p-3)(p-5)...(p-(2n-3))$ ways to choose values for
$x_1,...,x_{n-1}$.

After adding the numbers from two cases,  and considering the choice
of the value for $x_n$ on the first step, the following formula for
the characteristic polynomial of $S$ is obtained: $\chi_{S}(p) =
(p-1)(p-3)(p-5)...(p-(2n-3))(p-n)$.

Let $\hat S$ be the arrangement  $\hat S = S \setminus {\hat H}$,
where $\hat H$ is the arrangement with the equation $x_n = 0$. The
invertible linear operator $W$ maps the set of normal vectors to
hyperplanes of  $MA(G)$ to the set of normal vectors to hyperplanes
of $\hat S$ , which means that their intersection lattices are
isomorphic and their characteristic polynomials are equal.

The characteristic polynomial  of $\hat S$ can be found with the use
of the deletion-restriction lemma:  $\chi_{\hat S}(x) = \chi_{S}(x)
+ \chi_{S_{\hat H}}(x)$, where $S_{\hat H}$ is the arrangement of
hyperplanes in $\hat H$ that consists of intersections of $\hat H$
with other hyperplanes of $S$. The following are the equations of
hyperplanes of $S_{\hat H}$:
\begin{enumerate}
\item $\forall i, 1\leq i\leq n-1: x_i = 0 $
\item $\forall i_1, i_2, 1\leq i_1 \leq i_2 \leq n-1: x_{i_2} - x_{i_1} = 0$
\item $\forall i_1, i_2, 1\leq i_1 \leq i_2 \leq n-1: x_{i_2} + x_{i_1} = 0$.
\end{enumerate}

We will find $\chi_{S_{\hat H}}(x)$ using the finite field method.
For a sufficiently large prime  number $p$, $\chi_{S}(p)$ is equal
to the number of vectors $(x_1,...,x_n), x_i \in F_p$, that do not
satisfy any of the equations above. Let us choose values for $x_i$
consecutively. There are $p-1$ ways to choose a value for $x_1$,
there are $p-3$ ways to choose a value for $x_2$ after choosing a
value for $x_1$, and so on. Therefore, $\chi_{S_{\hat H}}(p) =
(p-1)(p-3)(p-5)...(p-(2n-3))$.

After applying the deletion-restriction lemma,  the following result
is obtained: in case when $G$ consists of one cycle of odd length,
$\chi_{MA(G)(x)} = (x-1)(x-3)(x-5)...(x-(2n-3))(x-(n-1))$.
\end{proof}

\begin{proposition}
Let graph $G$ consist of two subgraphs  that intersect in one vertex
-- one subgraph is a complete graph $K_3$ and the other is the 
``tail'' -- a path of an arbitrary length. Then,  $\chi_{MA(G)}(x) = 
(x-1)(x-3)(x-4)...(x-(n-2))(x-(n-1))(x-(n-1))(x-n)$. 
\end{proposition}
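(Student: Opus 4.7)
The plan is to extend the odd-cycle technique of the previous proposition to the graph $G$ consisting of a triangle $K_3$ and a tail path sharing a single vertex $A$. First I would label the triangle edges $e_1 = AB$, $e_2 = BC$, $e_3 = CA$, and the tail edges $e_4, e_5, \ldots, e_n$ in order outward from $A$. Since the triangle is an odd cycle, I would introduce an auxiliary hyperplane $H : x_1 - x_2 + x_3 = 0$ and work with the enlarged arrangement $L = MA(G) \cup \{H\}$, exactly as in the odd-cycle case.

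Next, I would construct an invertible linear operator $W : \mathbb{R}^n \to \mathbb{R}^n$ in block-triangular form, with its action on the tail coordinates given by the alternating-$\pm 1$ bidiagonal operator from the previous proof and its action on the triangle coordinates mirroring the odd-$3$-cycle construction. The goal is to check that $W$ carries the normals to hyperplanes of $L$ to the normals of a simpler arrangement $S$. To do this I would enumerate simple paths in $G$ by type: purely tail subpaths; single or double triangle edges; and hybrid paths that begin somewhere along the tail, reach $A$, and continue with one of $e_1$, $e_3$, $e_1 e_2$, or $e_3 e_2$. Using the $t_A, t_B, t_C$ representation from the previous proof, each path normal is a small linear combination of these, and $W$ should send the resulting vectors into three clean families defining $S$: coordinate hyperplanes $x_i = 0$, pair-difference hyperplanes $x_{i_2} - x_{i_1} = 0$, and sum-type hyperplanes $x_{j^\ast} - x_{i_2} - x_{i_1} = 0$ anchored at the distinguished coordinate $x_{j^\ast} = W(H)$.

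Then I would compute $\chi_S(p)$ by the finite field method exactly as in the odd-cycle proof: fix $x_{j^\ast} = k \neq 0$ (contributing $p - 1$), and consecutively select values for the remaining $n - 1$ coordinates, splitting into two cases according to whether some coordinate equals $k/2$. This is expected to give $\chi_S(p) = (p-1)(p-n)(p-3)(p-5)\cdots$ from Case $1$ plus an $(n-1)$-factor correction from Case $2$, in direct analogy with the odd-cycle analysis. Finally I would apply the deletion-restriction lemma, $\chi_{\hat S}(x) = \chi_S(x) + \chi_{S_{\hat H}}(x)$, where $\hat S = S \setminus \{\hat H\}$ has the same intersection lattice as $MA(G)$, and compute $\chi_{S_{\hat H}}$ by a similar consecutive-choice argument in one fewer variable. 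Combining and simplifying should yield the claimed formula $\chi_{MA(G)}(x) = (x-1)(x-3)(x-4)\cdots(x-(n-2))(x-(n-1))(x-(n-1))(x-n)$.

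The main obstacle is verifying that the images under $W$ of all hybrid-path normals—especially those that start in the middle of the tail and end at $B$ or $C$ after traversing $e_1$, $e_3$, $e_1 e_2$, or $e_3 e_2$—really do fall into the three clean families above. The number of such hybrid paths grows linearly with the tail length, and each must produce an equation of the correct type or else the finite-field count will not telescope into a polynomial with integer roots. Once this compatibility is established, the combinatorial counting is a direct extension of the odd-cycle argument, and the duplicated factor $(x-(n-1))^2$ should emerge naturally by combining the Case~$2$ contribution in the main count with the correction supplied by the deletion-restriction step.
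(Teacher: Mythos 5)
Your overall strategy --- conjugate the arrangement by a block bidiagonal operator into a cleaner arrangement $S$ and count points over $F_p$ --- is also the paper's strategy, and your enumeration of path types (tail-only, triangle-only, hybrid) is correct. The gap is in the structure you predict for $S$, and it breaks the counting step. You import from the odd-cycle proof the pair-sum hyperplanes $x_{j^\ast} - x_{i_1} - x_{i_2} = 0$ between \emph{distinct} tail coordinates, the resulting case split on whether some coordinate equals $k/2$, and factors $(p-3)(p-5)(p-7)\cdots$ decreasing by $2$. Those hyperplanes arose for the cycle because a path can pass \emph{through} the distinguished vertex with tail on both sides, giving normals of the form $t_A - t_B - t_C$. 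In the $K_3$-plus-tail graph nothing wraps around: a hybrid path enters the triangle at the shared vertex and terminates inside it, so its normal involves only one tail coordinate. Hence the conjugated arrangement has no pair-sum conditions among tail coordinates; besides the coordinate hyperplanes and (almost all) pairwise differences, the only extra hyperplanes are $x_1 - x_2 + x_3 = 0$ and $x_i - x_1 + x_2 - x_3 = 0$ for each tail index $i \geq 4$. Each tail coordinate then merely avoids the fixed set $\{0, k_1, k_2, k_3, k_1 - k_2 + k_3\}$ together with all previously chosen tail values, which is exactly why the roots of the answer are the consecutive integers $3, 4, \ldots, n$ rather than the odd numbers your Case 1/Case 2 scheme would produce. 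Your predicted intermediate count $(p-1)(p-n)(p-3)(p-5)\cdots$ cannot simplify to the stated polynomial.

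A second, smaller point: the auxiliary hyperplane and the deletion-restriction step are unnecessary here. The hyperplane $x_1 - x_2 + x_3 = 0$ of $S$ is already the image of a genuine hyperplane of $MA(G)$ (the length-two path through the triangle, with normal $x_1 - x_3 = 0$, is sent there), so the paper conjugates $MA(G)$ directly and obtains the polynomial as a sum over six cases classifying the coincidences among $0, k_1, k_2, k_3, k_1 - k_2 + k_3$ (for instance $k_3 = k_1$, or $k_2 = 2k_1$ with $k_3 = 3k_1$, or $k_3 = 2k_2 - k_1$, and so on), with no restriction correction at all. The repeated factor $(x-(n-1))^2$ emerges from adding those six case counts, not from combining a $k/2$ case with a deletion-restriction term.
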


\begin{proof}
Let graph $G$ consist of two subgraphs  that intersect in one vertex
-- one subgraph is a complete graph $K_3$ and the other is the
``tail''-- a path of an arbitrary length. Let us number the edges of
$G$ in the following order: first we number the edges of $K_3$, then
we number the edges of the tail, starting with the edge that is
adjacent to $K_3$.

\begin{figure}[h]
     \centering
     \includegraphics[width=\textwidth]{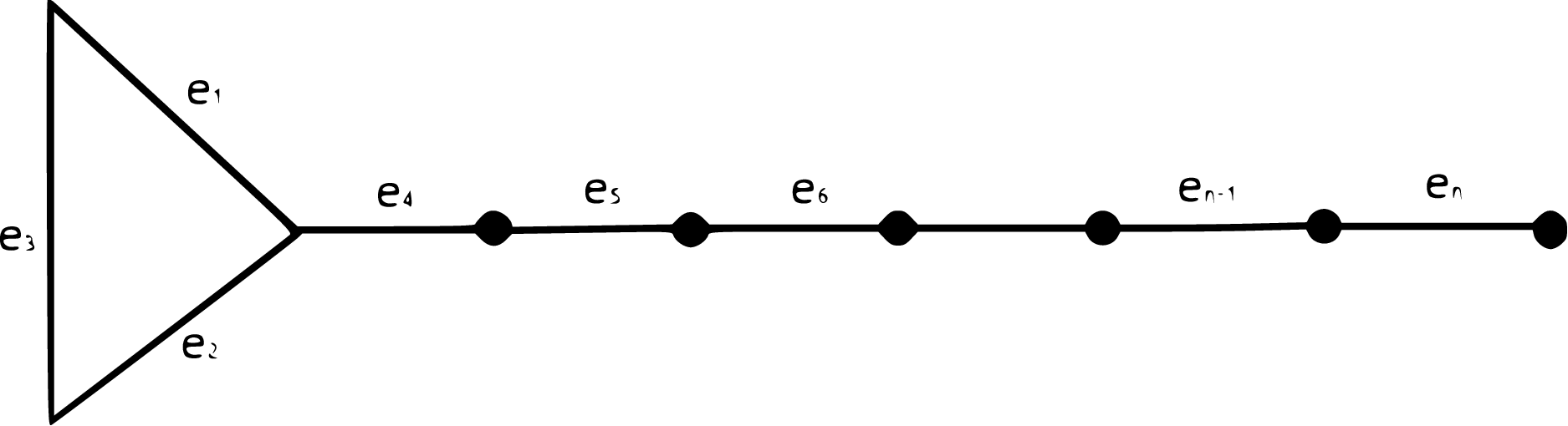}
     \caption{Graph $G$}
     \label{fig:my_label}
 \end{figure}

Let $W$ be a linear operator $W: \mathbb{R}^{n} \rightarrow
\mathbb{R}^{n}$ with the matrix shown on Fig.~\ref{head_matr}.

\begin{figure}[h]
     \centering
     \includegraphics[width=0.2\textwidth]{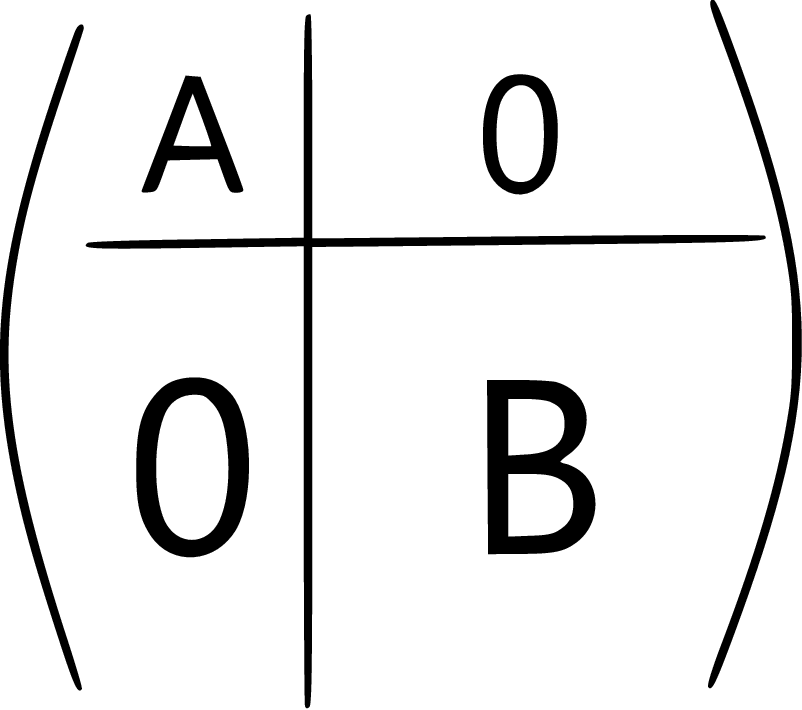}
     \caption{Matrix of the operator $W$}
     \label{head_matr}
 \end{figure}

Submatrix $A$ is this matrix of a size 3 by 3:
\[ \left( \begin{matrix}
1 & 0 & 0 \\ 0 & 1 & 1 \\ 0 & 0 & -1 \\
\end{matrix} \right) \]

The main diagonal of the submatrix $B$, as well  as the next
diagonal to the right, alternates between 1 and $-1$ (starting with
1), and there are 0s in all other places.

\[ \left( \begin{matrix}
1 & 1 & 0 & 0 & 0 & 0 & 0 \\
0 & -1 & -1 & 0 & 0 & 0 & 0\\
0 &  0 &  1 & 1 & 0 & 0& 0 \\
0 &  0 &  0 & -1 &-1 & 0 & 0\\
0 &  0 &  0 &  0 &  1 & 1 & 0 \\
0 &  0 &  0 &  0 &  0 &-1  & -1\\
0 &  0 &  0 &  0 &  0 &0   &1 \\
\end{matrix} \right) \]

Let $d_1 = (1,0,...,0)$, $d_2 = (0,1,0,...,0)$, $d_3 =
(0,1,-1,0,...,0)$,  $d_i = (0,0,0,1,-1,...,(-1)^{i},0,...,0)$, where
$(-1)^{i}$ stands on the $i$th place, $i >3$. Operator $W$ maps
$d_i$ into a vector $(0,...,0,1,0,...,0)$, where 1 stands on the
$i$th place.

Let us consider three possible cases for paths in $G$.

Case 1: The path is entirely inside the $K_3$ subgraph.  There are
only 6 paths that belong to this case: $\{e_1\}, \{e_2\}, \{e_3\},
\{e_1, e_2\}, \{e_2, e_3\}, \{e_3, e_1\}$.  $d_1, d_2, d_3 - d_2,d_2
- d_1,d_3, d_3 - d_2 + d_1$ are normal vectors to the corresponding
hyperplanes.

Case 2: The path is entirely inside the tail.  If the path contains
$e_4$, then $d_i$ will be a normal vector to the corresponding
hyperplane, otherwise $d_j - d_i, j>i$ will be a normal to the
corresponding hyperplane.

Case 3: The path goes from $K_3$ into  the tail. The following are
all possible cases of the intersection of this path and the $K_3$
subgraph. If this intersection is $\{e_1\}$, then $d_i - d_1$ is a
normal vector to the corresponding hyperplane. If $\{e_2\}$ is the
intersection, then $d_i - d_2$ will be the normal vector. If $\{e_2,
e_3\}$ is the intersection, then $d_i - d_3$ will be the normal
vector. If $\{e_3, e_1\}$ is the intersection, then $d_i - d_1 + d_2
- d_3$ is the normal vector.

Let $S$ be the arrangement that consists  of the following
hyperplanes:
\begin{enumerate}
\item $\forall i, 1\leq i\leq n: x_i = 0 $.
\item $\forall i_1, i_2, 1\leq i_1 \leq i_2 \leq n ,(i_1, i_2) \neq (1,3): x_{i_2} - x_{i_1} = 0$.
\item $\forall i, 4\leq i \leq n: x_i - x_1 + x_2 - x_3 = 0$.
\item $x_1 - x_2 + x_3 = 0$.
\end{enumerate}

Note that since there is the  invertible linear operator $W$ that
maps the set of normal vectors to hyperplanes of  $MA(G)$ to the set
of normal vectors to hyperplanes of $S$, their intersection lattices
are isomorphic and their characteristic polynomials are equal. We
will compute the characteristic polynomial $\chi_{S}(x)$ using the
finite field method. For a sufficiently large prime number $p$,
$\chi_{S}(p)$ is equal to the number of vectors $(x_1,...,x_n), x_i
\in F_p$, that do not satisfy any of the equations
\begin{enumerate}
\item $\forall i, 1\leq i\leq n: x_i = 0 $.
\item $\forall i_1, i_2, 1\leq i_1 \leq i_2 \leq n,(i_1, i_2) \neq (1,3) : x_{i_2} - x_{i_1} = 0$.
\item $\forall i, 4\leq i \leq n: x_i - x_1 + x_2 - x_3 = 0$.
\item $x_1 - x_2 + x_3 = 0$.
\end{enumerate}

Assume that values $x_1 = k_1,x_2 = k_2,x_3 = k_3$  are already
chosen. Then, the values for the rest of $x_i$ should be chosen, so, 
that none of the following conditions are satisfied
\begin{enumerate}
\item $\forall i, 4\leq i\leq n: x_i = 0 $.
\item $\forall i, 4\leq i\leq n: x_i = k_1 $.
\item $\forall i, 4\leq i\leq n: x_i = k_2 $.
\item $\forall i, 4\leq i\leq n: x_i = k_3 $.
\item $\forall i, 4\leq i\leq n: x_i = k_3 + k_1 - k_2 $.
\item $\forall i_1, i_2, 4\leq i_1 \leq i_2 \leq n: x_{i_2} - x_{i_1} = 0$.
\end{enumerate}

The number of possible ways  to choose the rest of $x_i$ depends on
the number of distinct values among  $0, k_1, k_2, k_3, k_3+k_1
-k_2$.

The value for $k_1$ can be chosen  in $p-1$ ways. Let us consider
the following cases.
\begin{itemize}
\item Case 1: $k_3 = k_1$. The conditions for
choosing a value for  $k_2$ are $k_2 \neq 0, k_2 \neq k_1, k_2 \neq
2k_1$, and we can choose $k_2$ in $p-3$ ways. In this case, there
are 4 distinct values among $0, k_1, k_2, k_3, k_3 + k_1 - k_2$,
therefore, there are $(p-1)(p-3)(p-4)...(p-n)$ ways to choose values
for $(x_1,..., x_n)$ in this case.

\item Case 2: $k_2 = 2k_1, k_3 = 3k_1$. There are
4 distinct values among $0, k_1, k_2, k_3, k_3 + k_1 - k_2,$
therefore, there are $(p-1)(p-4)(p-5)...(p-n)$ ways to choose values
for $(x_1,..., x_n)$ in this case.

\item Case 3: $k_2 = 2k_1, k_3 \neq 3k_1, k_3 \neq k_1$.
The conditions for choosing a value for $k_3$ are $k_3 \neq 0, k_3
\neq k_1, k_3  \neq k_2, k_3 \neq 3k_1$, and we can choose $k_3$ in
$p-4$ ways. There are 5 distinct values among $0, k_1, k_2, k_3, k_3
+ k_1 - k_2$, therefore, there are
$(p-1)(p-4)(p-5)(p-6)...(p-(n+1))$ ways to choose values for
$(x_1,..., x_n)$ in this case.

\item Case 4: $k_2 = k_{1}/2, k_3 \neq k_1$. The conditions
for choosing a value for  $k_3$ are $k_3 \neq 0, k_3 \neq k_1, k_3
\neq k_2, k_3 \neq k_2 - k_1$,  and we can choose $k_3$ in $p-4$
ways. There are 5 distinct values among $0, k_1, k_2, k_3, k_3 + k_1
- k_2$, therefore, there are $(p-1)(p-4)(p-5)(p-6)...(p-(n+1))$ ways to 
choose values for $(x_1,..., x_n)$ in this case.

\item Case 5: $k_2 \neq 2k_1, k_2 \neq k_{1}/2, k_3 = 2k_2 - k_1$.
The conditions for choosing a  value for $k_2$ are $k_2 \neq 0, k_2
\neq k_1,  k_2 \neq 2k_1, k_2 \neq k_{1}/2$, and we can choose $k_2$
in $p-4$ ways. There are 4 distinct values among $0, k_1, k_2, k_3,
k_3 + k_1 - k_2$, therefore, there are
$(p-1)(p-4)(p-4)(p-5)...(p-n)$ ways to choose values for $(x_1,...,
x_n)$ in this case.

\item Case 6: $k_2 \neq 2k_1, k_2 \neq k_{1}/2, k_3
\neq 2k_2 - k_1, k_3 \neq k_1$. The conditions for choosing a value
for $k_2$ are $k_2 \neq 0, k_2 \neq k_1, k_2 \neq 2k_1, k_2 \neq
k_{1}/2$, and we can choose $k_2$ in $p-4$ ways. The conditions for
choosing a  value for $k_3$ are $k_3 \neq 0, k_3 \neq k_1, k_3 \neq
k_2, k_3 \neq k_2 - k_1, k_3 \neq 2k_2 - k_1$, and we can choose
$k_3$ in $p-5$ ways. There are 5 distinct values among $0, k_1, k_2,
k_3, k_3 + k_1 - k_2$, therefore, there are
$(p-1)(p-4)(p-5)(p-5)(p-6)...(p-(n+1))$ ways to choose values for
$(x_1,..., x_n)$ in this case.
\end{itemize}
The value of $\chi_{MA(G)}(p)$ is the sum of numbers  from these
cases. As a result, if $G$ consists of a $K_3$ subgraph and a
``tail'' subgraph that are connected through a shared vertex, the
formula for  $\chi_{MA(G)}(x)$  is
$$\chi_{MA(G)}(x) =
(x-1)(x-3)(x-4)...(x-(n-2))(x-(n-1))(x-(n-1))(x-n). 
$$
\end{proof}

\section{Improper weight function problem}

Let $G(V,E),$  $|E|=n$ be a graph without loops, multiple edges and
isolated vertices. Let $F_p$ be the group of residue classes modulo
a prime number $p$. A weight function $W:E \rightarrow F_{p}^{k}$
with an arbitrary $k \in \mathbb{N}$ is called improper, if there is
a sequence of edges $(e_{i_1},...,e_{i_d})$ that form a simple path
or a simple even cycle in $G$, such that
\begin{eqnarray}
 W(e_{i_1}) - W(e_{i_2}) + W(e_{i_3}) - ... + (-1)^{d-1}W(e_{i_d}) = {\bf 0} \in F_{p}^{k}. 
\label{one}
\end{eqnarray}

Otherwise the weight function is called proper. The choice of names
is motivated by the following consideration. If a weight function on
the edges of graph $G$ is given, then the weight of a matching can
be defined as the sum of the weights of its edges. Furthermore,
there is a one-to-one correspondence between the matchings of graph
$G$ and the vertices of the matching polytope of graph $G$
\cite{edmonds}. Thus, based on the weight function on the edges of
graph $G$, it is possible to construct a weight function on the
vertices of the matching polytope of graph $G$. An important
property of the matching polytope is that any two vertices of this
polytope are connected with the edge, if and only if the symmetrical
difference of the corresponding matchings consists of exactly one
simple path of arbitrary length or one simple cycle of even length
\cite[Theorem 25.3]{sch}.

 If the original weight function on the
edges of the graph was proper, then for the resulting weight
function on the vertices of the polytope, any two vertices connected
by an edge of the polytope have different weights. Conversely, if
the initial weight function on the edges of the graph was improper,
there is at least one pair of vertices in the matching polytope that
are connected by an edge and have the same weight. In this case, the
path or the cycle that satisfies (\ref{one}) is the symmetrical
difference of two matchings that correspond to these two vertices.

Let us state the improper weight function  problem. For an arbitrary
graph $G(V,E), |E| = n,$ without loops, multiple edges and isolated 
vertices, an arbitrary prime number $p$, an arbitrary natural number
$k$ and an arbitrary weight function $W:E\rightarrow F_{p}^{k}$ the
problem is to establish, if function $W$ is an improper one.

\begin{theorem}
The improper weight function problem is NP-complete.
\end{theorem}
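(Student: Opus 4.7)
Membership in NP is immediate: the edge sequence $(e_{i_1},\dots,e_{i_d})$ itself is a polynomial-size certificate, and a verifier checks in polynomial time that it forms a simple path or a simple even cycle in $G$ and that $\sum_{j=1}^{d}(-1)^{j-1}W(e_{i_j})=\mathbf{0}$ in $F_{p}^{k}$.

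For NP-hardness the plan is to reduce from 3-SAT, as announced in the introduction. Given $\Phi$ on variables $x_{1},\dots,x_{m}$ with clauses $C_{1},\dots,C_{r}$, I produce in polynomial time a graph $G$, a prime $p$ and a dimension $k$ polynomial in $m+r$, and a weight function $W\colon E(G)\to F_{p}^{k}$ such that $W$ is improper on $G$ iff $\Phi$ is satisfiable. The graph $G$ is a chain of gadgets from a source $s$ to a sink $t$. A variable gadget for $x_{i}$ is a sequence of $n_{i}$ ``double paths'', each consisting of two parallel length-$2$ paths between consecutive anchors and carrying one ``copy'' $(i,s)$ of the truth choice for $x_{i}$ (one branch True, one branch False), where $n_{i}$ is the number of occurrences of $x_{i}$ in $\Phi$. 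A clause gadget for $C_{j}$ is a ``triple path'' between two anchors with one branch per literal of $C_{j}$; traversing branch $k$ is interpreted as claiming that $l_{jk}$ is the literal witnessing the satisfaction of $C_{j}$. Thus the simple $s$-$t$ paths correspond bijectively to choices of (truth value $y_{i}^{(s)}\in\{0,1\}$ for each copy, one literal per clause).

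The weights live in $F_{p}^{k}$ with coordinates split into variable-copy blocks, consistency blocks and tag blocks. In the True branch of a copy $(i,s)$ the two edges are weighted so that the alternating-sum contribution is $+e_{(i,s)}$; the False branch contributes $\mathbf{0}$. Consistency coordinates are placed between adjacent copies $(i,s)$ and $(i,s+1)$ so that any discrepancy $y_{i}^{(s)}\ne y_{i}^{(s+1)}$ produces an uncancellable vector in the alternating sum. In the clause gadget of $C_{j}$ a branch matching an occurrence $(i,s)$ with a positive literal contributes $-e_{(i,s)}$, while a branch matching a negative literal contributes $+e_{(i,s)}$ together with an offset that is absorbed by the False branch of the same copy; the arithmetic is arranged so that the coordinate $(i,s)$ of the alternating sum of an $s$-$t$ path vanishes precisely when the literal at $(i,s)$ is satisfied by $y_{i}^{(s)}$. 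Consequently a satisfying assignment of $\Phi$ gives a zero-sum $s$-$t$ path (take all copies consistent with the assignment and one satisfied literal per clause), and conversely, reading off the branches of a zero-sum $s$-$t$ path recovers a satisfying assignment.

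The main obstacle, as in every such reduction, is the exclusion of spurious witnesses of improperness: simple paths that are not complete $s$-$t$ paths, and simple even cycles --- notably the $4$-cycles naturally present inside every double or triple path. I would dispose of these by augmenting the weight vectors with further ``position'' coordinates, so that any proper subpath of the backbone retains an uncancellable tag in its alternating sum, and by perturbing the internal gadget weights generically so that every internal even cycle has nonzero alternating sum in $F_{p}^{k}$; because $p$ and $k$ are polynomially large, such generic choices exist and can be produced in polynomial time. With this book-keeping discharged, the reduction is polynomial in $|\Phi|$, and together with the NP certificate it establishes the theorem.
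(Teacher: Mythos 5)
Your NP-membership argument is fine and agrees with the paper's. Your hardness reduction also follows the same broad plan as the paper's (a chain of variable and clause gadgets joined at cut vertices, with vector-valued weights arranged so that zero alternating-sum $s$--$t$ paths encode satisfying assignments). However, the step you explicitly defer --- excluding spurious witnesses --- is exactly where the technical content of the theorem lies, and the way you propose to discharge it does not work. You cannot ``perturb the internal gadget weights generically'': the reduction requires that \emph{every} intended $s$--$t$ path (one for each satisfying assignment together with a choice of witnessing literal per clause, so in general exponentially many paths) have alternating sum exactly $\mathbf{0}$, while every other simple path and every simple even cycle has nonzero alternating sum. The first family imposes exact linear constraints on the weights, and a generic perturbation violates them just as surely as it enforces the second family. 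The same objection applies to unspecified ``position coordinates'': they must cancel identically along all intended full paths while surviving on every proper subpath, and a generic choice will not achieve both.

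The paper resolves this with two precise, non-generic devices that your sketch is missing. First, for every vertex $v$ other than the two designated endpoints $A_1$ and $Q_m$ it introduces a coordinate $w_v$ with $w_v(e)=1$ exactly on the edges incident to $v$. Along any simple path this coordinate telescopes in the alternating sum: an interior vertex $v$ lies on two consecutive edges, which contribute $(-1)^{j-1}+(-1)^{j}=0$, so the coordinate of the alternating sum is nonzero if and only if $v$ is an endpoint of the path. Hence any zero-sum simple path is forced to run from $A_1$ to $Q_m$, with no genericity needed. Second, since on an even cycle every vertex is interior and all these coordinates vanish, the internal $4$-cycles must be killed separately: the paper's clause gadget is $K_{2,7}$ whose seven middle vertices carry pairwise distinct weight vectors in the three clause coordinates, so any $4$-cycle inside it has nonzero alternating sum, and the variable $4$-cycles are handled because exactly one of their edges carries a nonzero clause coordinate. (Incidentally, in your own double-path gadget the True and False branches contribute different vectors, so its $4$-cycle is already excluded without any perturbation --- but you need to verify this explicitly rather than appeal to genericity.) Until you replace the generic-perturbation step with exact constructions of this kind for both the subpath and the internal-cycle exclusions, the reduction is not established.
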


\begin{proof}
We will first prove that the problem belongs to the complexity class
NP. By definition, a recognition problem $A$ with possible answers
``Yes'' or ``No'' belongs to the class NP if there is a
nondeterministic algorithm that consists of a guessing stage and a
checking stage, such that when $A$ has an answer  ``Yes'', the
guessing stage returns a structure $S$, whose size depends
polynomially from the size of the input to $A$, and then the
checking stage takes $S$  and the input to $A$ as its own input and
verifies in polynomial time, that the answer to $A$ was ``Yes''.

For an improper weight function problem,   an input consists of
graph $G$ that contains $n$ edges and $b, b \in \mathbb{N}, b(b-1)/2
\leq n$ vertices, and a weight function  $W:E \rightarrow F_{p}^{k}$
that can be described with $n  k$ elements of $F_p$. The size of an
input to an improper weight function problem is $O(n  k)$. Given an
instance of an improper weight function problem with answer ``Yes'',
a nondetermininstic algorithm can guess the path
$(e_{i_1},...,e_{i_d})$ that satisfies (\ref{one}), and then compute
$W(e_{i_1}) - W(e_{i_2}) + W(e_{i_3}) - ... + (-1)^{d-1}W(e_{i_d})$ 
to verify, that the answer to the improper weight function problem
is ``Yes''. And since $d \leq n$, both the size of structure
returned on the guessing stage and the time of computation on the
checking stage are polynomial from the input to the imporper weight
function problem.

To show that improper weight function problem is an NP-complete one,
we now transform an arbitrary 3-satisfiability problem into an
improper weight function problem. Let $X = \{x_1,..., x_n\}$ be a
set of Boolean variables and let $U = \{u_1,...,u_m\}$ be set of
clauses making up an arbitrary instance of 3-satisfiability problem.
The graph $G$ for the improper weight function problem will be
constructed in the following way. For each variable $x_i$, there
will be a subgraph $Z_i$ of the graph $G$ that consists of one cycle
of length 4, with the set of edges $\{a_i, b_i, c_i, d_i\}$ and the
set of vertices $\{A_i, B_i, C_i, D_i\}$, such that $a_i =
(A_i,B_i), b_i = (B_i, C_i),c_i = (C_i, D_i), d_i = (D_i, A_i) $.

\begin{figure}[h]
     \centering
     \includegraphics[width=0.5\textwidth]{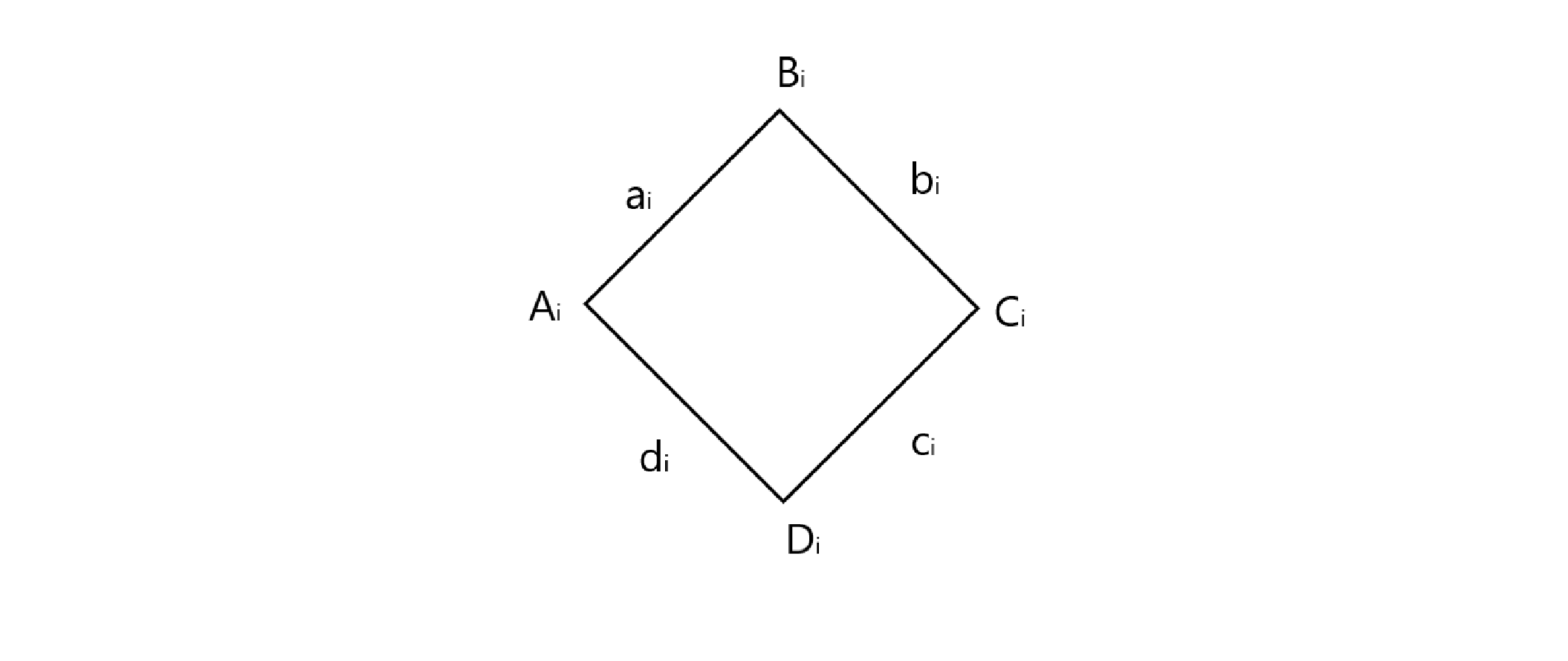}
     \caption{Subraph $Z_i$}
     \label{fig:my_label}
 \end{figure}

For each clause $u_j$, there  will be a bipartite subgraph $Y_j
\cong K_{2,7}$, with $\{P_j, Q_j\}$ and \\ $\{R_{j,1},
R_{j,2},R_{j,3}, R_{j,4}, R_{j_5}, R_{j,6}, R_{j_7}\}$ being two
parts of $Y_j$.

\begin{figure}[h]
     \centering
     \includegraphics[width=0.2\textwidth]{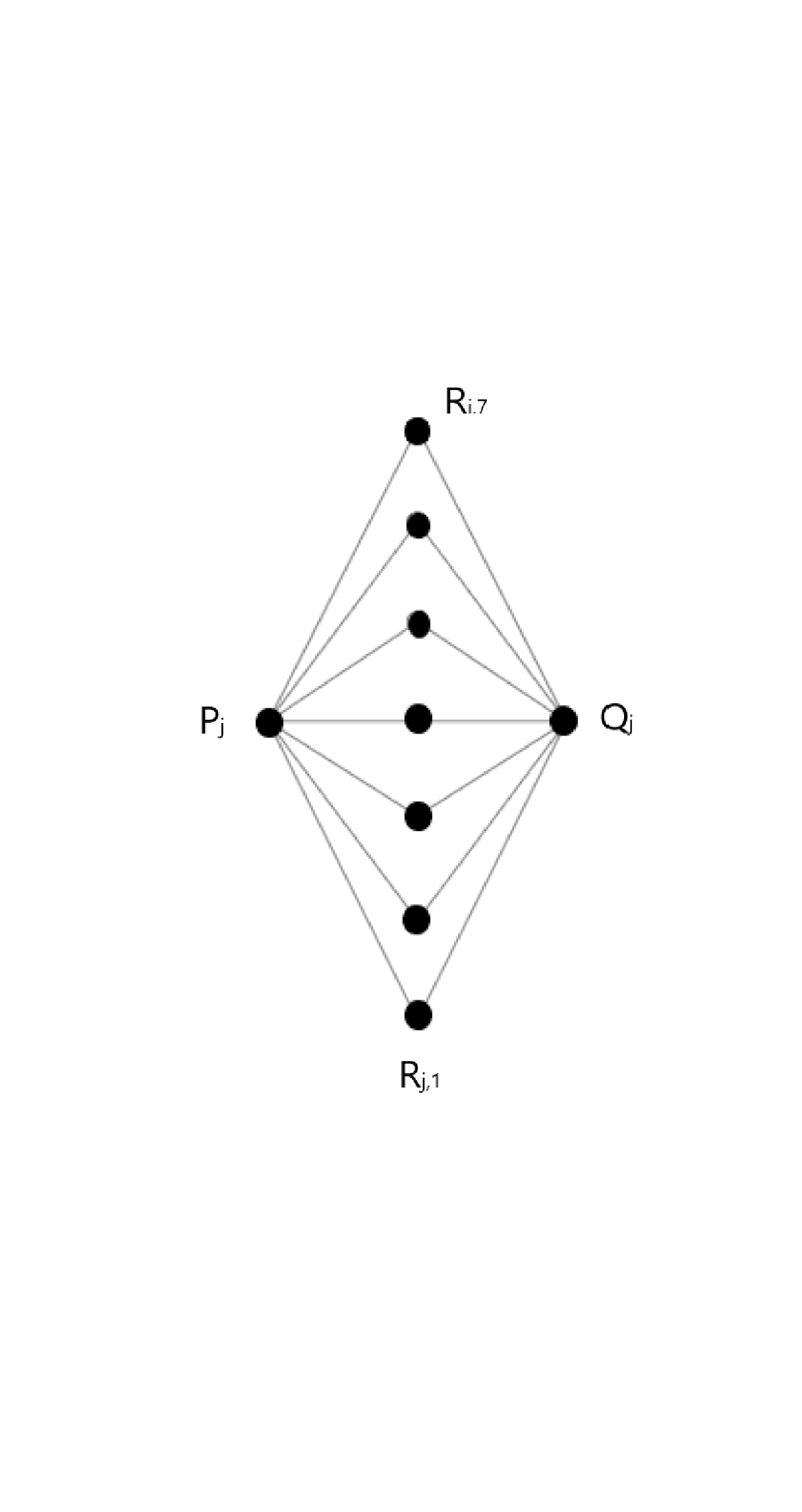}
     \caption{Subraph $Y_j$}
     \label{fig:my_label}
 \end{figure}

To construct the graph $G$, we connect $Z_i$  with $Z_{i+1}$ through
a shared vertex, starting with $Z_1$: $C_i = A_{i+1}$. Then, we
connect $Z_n$ with $Y_1$ through a shared vertex: $C_n = P_1$. Then,
we connect $Y_j$ with $Y_{j+1}$ through a shared vertex, starting
with $Y_1$: $Q_i =P_{i+1}$.

\begin{figure}[h]
     \centering
     \includegraphics[width=\textwidth]{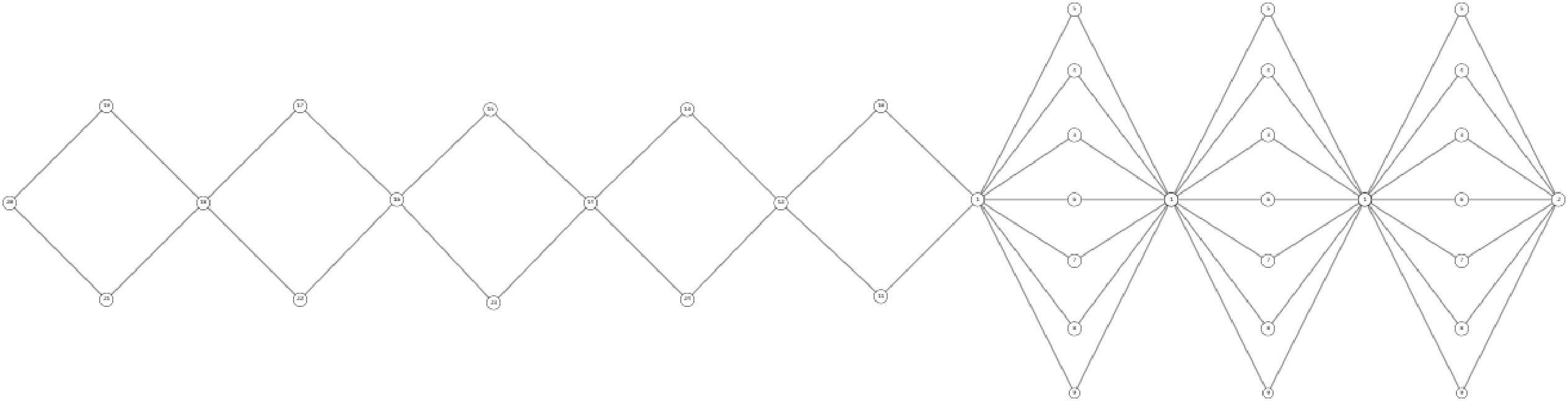}
     \caption{An example for $n=5$ and $m=3$}
     \label{fig:my_label}
 \end{figure}

The constructed graph $G(V,E)$ consists  of $3n + 8m +1$ vertices
and $4n + 14m$ edges. The next step is to construct the weight
function $W:E \rightarrow F_{p}^{k}$. The prime number $p$ can be
chosen arbitrarily. Let $k = 3n + 11m - 1$. The values of the weight
function $W(e)=(w_1(e),...,w_k(e))$ will be defined in the 
following way.

Let $\hat V = V\setminus \{A_1, Q_m\}$.  $|\hat V| = 3n +8m -1$. Let
all the elements of $\hat V$ be numbered: $\hat V =
\{v_1,...,v_{3n+8m-1}\}$. For $e \in E$ and $1 \leq i \leq 3n+8m-1$,
let $w_i(e) = 1$, if $e$ is an adjacent edge to $v_i$, and let
$w_i(e) = 0$ otherwise.
Let $u_j = (t_{\alpha},t_{\beta}, t_{\gamma})$ be a clause.

$w_{3n+8m-1+ 3(j-1)+1}(e)=1 $, if either $t_{\alpha} = x_{\alpha}$
and $e = (A_{\alpha},B_{\alpha})$, or $t_{\alpha} = \neg x_{\alpha}$
and $e = (A_{\alpha},D_{\alpha})$.

$w_{3n+8m-1+ 3(j-1) +1}(e) = -1$, if $e \in
\{(Q_j,R_{j,1}),(Q_j,R_{j,2}),(Q_j,R_{j,3}), (Q_j,R_{j,4})\}$.
Otherwise, $w_{3n+8m-1+ 3(j-1) +1}(e) = 0$.

$w_{3n+8m-1+ 3(j-1) +2}(e) =1$, if either $t_{\beta} = x_{\beta}$
and $e = (A_{\beta},B_{\beta})$, or $t_{\beta} = \neg x_{\beta}$ and
$e = (A_{\beta},D_{\beta})$.

 $w_{3n+8m-1+ 3(j-1) +2}(e) = -1$, if $e\in
 \{(Q_j,R_{j,1}),(Q_j,R_{j,2}),(Q_j,R_{j,5}), (Q_j,R_{j,6})\}$.
Otherwise, $w_{3n+8m-1+ 3(j-1) +2}(e) = 0$.  

$w_{3n+8m-1+ 3(j-1)+3}(e) =1$, if either $t_{\gamma} = x_{\gamma}$ and $e =
(A_{\gamma},B_{\gamma})$, or $t_{\gamma} = \neg x_{\gamma}$ and $e =
(A_{\gamma},D_{\gamma})$.

$w_{3n+8m-1+ 3(j-1) +3}(e) = -1$, if $e \in 
\{(Q_j,R_{j,1}),(Q_j,R_{j,3}),(Q_j,R_{j,5}), (Q_j,R_{j,7})\}$.
Otherwise, $w_{3n+8m-1+ 3(j-1) +3}(e) = 0$.

Now function $W$ is completely defined. Because of the way  that
$w_i(e), 1\leq i \leq 3n+8-1$, were defined, if a path starts or
ends in any vertex $v_j$ from $V \setminus \{A_1, Q_m\}$, there is
exactly one edge $e$ on this path that has $w_j(e) \neq 0$.
Therefore, it is impossible for a simple path
$(e_{i_1},...,e_{i_d})$ that satisfies (\ref{one}) to have any ends,
other then $A_1$ and $Q_m$.  Note that the only cycles of even
length in $G$ are cycles of length 4 inside $Z_i$ or $Y_j$. It is
not possible for a cycle of length 4 inside $Z_i$ or $Y_j$ to
satisfy (\ref{one}) as well, because with these values for $w_i(e),
i > 3n+8-1$, it is nessesary for every path or even cycle
$(e_{i_1},...,e_{i_d})$ that satisfies (\ref{one}) and contains an
edge in one of $Z_i$ to also contain an edge in one of $Y_j$, and
visa versa. In case of $Z_i$, the reason for this is that there is a
specific index $t$ that corresponds to a pair $(x_i, u)$, where $u$
is a clause that contains either $x_i$ or $\neg x_i$, such that
there is exactly one edge $e$ in $Z_i$ with $w_t(e) \neq 0$. In case
of $Y_j$, each of the edges $e \in
\{(Q_j,R_{j,1}),(Q_j,R_{j,2}),(Q_j,R_{j,3}), (Q_j,R_{j,4}),
(Q_j,R_{j,5}), (Q_j,R_{j,6}), (Q_j,R_{j,7})\}$ has its own distinct
vector of values for $(w_{3n+8m-1+ 3(j-1)+1}(e),w_{3n+8m-1+
3(j-1)+2}(e),w_{3n+8m-1+ 3(j-1)+3}(e))$, and since any cycle of
length 4 in $Y_j$ contains exactly two of those edges, it can't
satisfy (\ref{one}).
 Therefore, the only paths or even cycles that can possibly
satisfy (\ref{one}) are paths that connect $A_1$ and $Q_m$. Note
that $k$, the size of $G$ and the computation of $W$ are all
polynomial from $m$ and $n$.

Let $\{\alpha_1,...,\alpha_n\}$ be a set of  values assigned to $X$,
such that all clauses $u_1,...,u_n$ are satisfied. The current goal
is to find a path  $(e_{i_1}, ...,e_{i_d})$ in $G$ that satisfies
(\ref{one}), knowing $\{\alpha_1,...,\alpha_n\}$. Let us consider the 
following path $(e_{i_1},...,e_{i_{2(m+n)}})$ of length $2(m+n)$
from $A_1$ to $Q_m$. For $1\leq i \leq n$, if $\alpha_i = True$,
then $e_{2 i -1} = a_i, e_{2 i } = b_i$, and if $\alpha_i = False$,
then $e_{2 i -1} = d_i, e_{2 i } = c_i$.

Let $u_j = (t_{\alpha},t_{\beta}, t_{\gamma})$ be  a clause. Let us
consider all possible cases, after values $\alpha_1,...,\alpha_n$
are assigned.
\begin{itemize}
\item If $t_{\alpha} = True, t_{\beta} = True, t_{\gamma} = True$, then $e_{2n+2j-1} = (P_j, R_{j,1}), e_{2n+2j} = (Q_j, R_{j,1})$.
\item If $t_{\alpha} = True, t_{\beta} = True, t_{\gamma} = False$, then $e_{2n+2j-1} = (P_j, R_{j,2}), e_{2n+2j} = (Q_j, R_{j,2})$.
\item If $t_{\alpha} = True, t_{\beta} = False, t_{\gamma} = True$, then $e_{2n+2j-1} = (P_j, R_{j,3}), e_{2n+2j} = (Q_j, R_{j,3})$.
\item If $t_{\alpha} = True, t_{\beta} = False, t_{\gamma} = False$, then $e_{2n+2j-1} = (P_j, R_{j,4}), e_{2n+2j} = (Q_j, R_{j,4})$.
\item If $t_{\alpha} = False, t_{\beta} = True, t_{\gamma} = True$, then $e_{2n+2j-1} = (P_j, R_{j,5}), e_{2n+2j} = (Q_j, R_{j,5})$.
\item If $t_{\alpha} = False, t_{\beta} = True, t_{\gamma} = False$, then $e_{2n+2j-1} = (P_j, R_{j,6}), e_{2n+2j} = (Q_j, R_{j,6})$.
\item If $t_{\alpha} = False, t_{\beta} = False, t_{\gamma} = True$, then $e_{2n+2j-1} = (P_j, R_{j,7}), e_{2n+2j} = (Q_j, R_{j,7})$.
\end{itemize}
Now the path  $P = (e_{i_1},...,e_{i_{2(m+n)}})$ is completely 
defined.

Let
$$
W(e_{i_1}) - W(e_{i_2}) + W(e_{i_3}) - ... + (-1)^{2 
(m+n)-1}W(e_{i_{2 (m+n)}}) = (z_1,...,z_k) \in F_{p}^{k}.
$$
 For
$1 \leq i \leq 3n+8m-1$, $z_i = 0$, because for every vertex  $v \in
V\setminus \{A_1, Q_m\}$ path $P$ either passes through $v$ without
stopping there or doesn't pass through $v$ at all. For  $i >
3n+8m-1$, $z_i$ corresponds to a pair $(x_q, u_p)$, where $u_p$
contains either $x_q$ or~$\neg x_q$. There are four possible cases:
\begin{itemize} 
    \item Case 1: $\alpha_q = True$, $u_p$ contains $x_q$. In this case, $w_{i}(e_{i_{2 q -1}}) = 1, w_{i}(e_{i_{2 n+ 2 p -1}}) = -1, w_i(e) = 0$ for every other edge $e$ on $P$.
    \item Case 2: $\alpha_q = False$, $u_p$ contains $x_q$. In this case, $w_{i}(e_{i_{2 q -1}}) = 0, w_{i}(e_{i_{2 n+ 2 p -1}}) = 0, w_i(e) = 0$ for every other edge $e$ on $P$.
    \item Case 3: $\alpha_q = True$, $u_p$ contains $\neg x_q$. In this case, $w_{i}(e_{i_{2 q -1}}) = 0, w_{i}(e_{i_{2 n+ 2 p -1}}) = 0, w_i(e) = 0$ for every other edge $e$ on $P$.
    \item Case 4: $\alpha_q = False$, $u_p$ contains $\neg x_q$. In this case, $w_{i}(e_{i_{2 q -1}}) = 1, w_{i}(e_{i_{2 n+ 2 p -1}}) = -1, w_i(e) = 0$ for every other edge $e$ on $P$.
\end{itemize}
In each of these cases, $z_i = 0$. Therefore, for a path $P =
(e_{i_1},...,e_{i_{2 (m+n)}})$,  where $e_{i_1}$ is adjacent to
$A_1$ and $e_{i_{2 (m+n)}}$ is adjacent to $Q_m$, we have
$$
W(e_{i_1}) - W(e_{i_2}) + W(e_{i_3}) - ... + (-1)^{2 
(m+n)-1}W(e_{i_{2 (m+n)}}) = 0.
$$

Conversely, let $P = (e_{i_1},...,e_{i_{2 (m+n)}})$ be a path,
such that
$$
W(e_{i_1}) - W(e_{i_2}) + W(e_{i_3}) - ... + (-1)^{2 
(m+n)-1}W(e_{i_{2 (m+n)}}) = 0.
$$
 The current goal is to choose
values $\alpha_1,...,\alpha_n$ for $x_1,...,x_n$, such that all
clauses $u_1,...,u_m$ are satisfied. For $1 \leq i \leq n$, let
$\alpha_i = True$, if $e_{i_{2 i -1}} = a_i$, and let $\alpha_i =
False$, if $e_{i_{2 i -1}} = d_i$. The next step is to check if all
clauses from $U$ are satisfied. Let $u_j =
(t_{\alpha},t_{\beta},t_{\gamma})$ be a clause. If $e_{i_{ 2 n + 2 j
-1}} \in \{(P_j,R_{j,1}),(P_j,R_{j,2}),(P_j,R_{j,3}),(P_j,R_{j,4})
\}$, then because $W(e_{i_1}) - W(e_{i_2}) + W(e_{i_3}) - ... + 
(-1)^{2 (m+n)-1}W(e_{i_{2 (m+n)}}) = 0$, either $u_j$ contains
$x_{\alpha}$ and $e_{2\alpha -1} = a_{\alpha}$ , which means that
$x_{\alpha} = True$ and $u_j$ is satisfied, or $u_j$ contains $\neg
x_{\alpha}$ and $e_{2\alpha -1} = d_{\alpha}$, which means that
$x_{\alpha} = False$ and $u_j$ is satisfied again. In both cases,
$u_j$ is satisfied after values $\alpha_1,...,\alpha_n$ are
assigned. Cases when $e_{i_{ 2 n + 2 j -1}} \in
\{(P_j,R_{j,5}),(P_j,R_{j,6}),(P_j,R_{j,7}) \}$ can be resolved with
similar reasoning, with $x_{\beta}$ or $x_{\gamma}$ being considered
instead of $x_{\alpha}$. Therefore, after values $\alpha_1,...,
\alpha_n$ were chosen, all clauses $u_1,..., u_m$ are satisfied.

Therefore, this process of obtaining an improper  weight function
problem from a 3-SAT problem is a polynomial transformation of a
3-SAT problem into an improper weight function problem, which makes
an improper weight function problem an NP-complete one.
\end{proof}

\begin{proposition}
For an arbitrary graph $G(V,E)$ without loops  and multiple edges, a
sufficiently large prime number $p$ and an arbitrary natural $k$,
$\chi_{MA(G)}(p^k)$ is equal to the number of proper weight
functions $W:E \rightarrow F_{p}^{k}$.
\end{proposition}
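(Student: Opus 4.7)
The plan is to identify proper weight functions $W:E\to F_p^k$ with points in the complement of the mod-$p$ reduction of $MA(G)$ over the field $F_{p^k}$, and then invoke the prime-power version of Athanasiadis's finite field method (the one noted in the Introduction as being described in \cite{stanley}).

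First, I fix an ordering $e_1,\ldots,e_n$ of the edges of $G$, so that a weight function $W$ is encoded by the tuple $(W(e_1),\ldots,W(e_n))\in (F_p^k)^n$. Since the additive group of $F_p^k$ is canonically isomorphic to the additive group of $F_{p^k}$ (both are $k$-dimensional vector spaces over $F_p$), fixing such an isomorphism $\varphi:F_p^k\to F_{p^k}$ lets me view $W$ as an element of $F_{p^k}^n$. This gives a bijection between weight functions $E\to F_p^k$ and points of $F_{p^k}^n$.

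Next, I verify that properness matches the complement condition. The defining equation of the hyperplane of $MA(G)$ associated to a simple path or simple even cycle $(e_{i_1},\ldots,e_{i_d})$ is
\[
x_{i_1}-x_{i_2}+x_{i_3}-\cdots+(-1)^{d-1}x_{i_d}=0,
\]
whose coefficients lie in $\{-1,0,1\}$. Because $\varphi$ is $F_p$-linear, and in particular respects addition and negation, the tuple $(W(e_1),\ldots,W(e_n))$ satisfies this equation in $F_p^k$ if and only if $(\varphi(W(e_1)),\ldots,\varphi(W(e_n)))$ satisfies the same equation in $F_{p^k}$. Consequently $W$ is improper exactly when the corresponding point of $F_{p^k}^n$ lies on at least one hyperplane of $MA(G)$ (interpreted over $F_{p^k}$), and proper exactly when it lies in the complement.

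Finally, the prime-power form of the finite field method states that for sufficiently large $p$, $\chi_{MA(G)}(p^k)$ equals the number of points of $F_{p^k}^n$ that lie on no hyperplane of the mod-$p$ reduction of $MA(G)$. Composing with the bijection above yields exactly the number of proper weight functions $W:E\to F_p^k$, which proves the proposition. The only subtle point is the compatibility check in the second step: one must be careful that the additive isomorphism $F_p^k\cong F_{p^k}$ carries solutions to solutions, which works because the defining equations involve only $\pm 1$ coefficients and no multiplicative structure of $F_{p^k}$ is invoked. Once that compatibility is in place, the proposition is a direct consequence of the finite field method.
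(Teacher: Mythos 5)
Your proposal is correct and follows essentially the same route as the paper: both identify $F_p^k$ with $F_{p^k}$ via an additive ($F_p$-linear) isomorphism (the paper constructs it explicitly as $F_p(x)/d$ for an irreducible $d$), observe that the hyperplane equations of $MA(G)$ have coefficients in $\{-1,0,1\}$ so properness transfers to the complement condition over $F_{p^k}$, and then apply the prime-power form of the finite field method. No substantive difference.
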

\begin{proof}
Let $A$ be an arbitrary arrangement of hyperplanes in
$\mathbb{R}^n$, such that the equation of each hyperplane $A$ has 
integer coefficients. Let $A_q$ be a subset of all elements
$F_{q}^{n}$ satisfying the equation of at least one of the
hyperplanes $A$ modulo $p$, where $q = p^k$ is a prime power. The
finite field method allows us to find the value of the
characteristic polynomial of the arrangement $A$ at the point $q$
for sufficiently large $p$: $\chi_{A}(q) = |F_{q}^{n} \setminus
A_q|$.

Let $q = p^k$ be a power of   a prime number, and let $d$ be an
irreducible polynomial of degree $k$ over $F_p$. Then, there is a
one-to-one correspondence between $F_p^{k}$ and $F_q$ that preserves
addition and multiplication by elements of $F_p$: each element
$(a_1,..., a_k)$ from $F_{p}^{k}$ is associated with the element
$a_1 + a_{2}x + a_{3}x^{2}+...+a_{k}x^{k-1}$ in $F_p(x)/d \cong 
F_q$. This map induces a map from the set of functions $W:E
\rightarrow F_{p}^{k}$ to the set of functions $\hat W:E \rightarrow
F_{q}$.
 A proper function $W$ will be mapped to a function $\hat W$, which has the
following property: $(\hat W(e_1),..., \hat W(e_n))$ belongs to $
F_q^{n} \setminus MA(G)_q$, due to (\ref{one}) and the definition of
$MA(G)$.

Thus, for a sufficiently large  $p$ and an arbitrary $k$,
$\chi_{MA(G)}(p^k)$ is equal to the number of proper weight
functions $W:E \rightarrow F_{p}^{k}$.
\end{proof}

\section{An application of the improper weight function problem in~cryptography}

This section contains a description of a cryptosystem  that uses an
alternating weighted path problem, which is an extention of the
improper weight function problem. Given a graph $G(V,E), |E| = n$, a
prime number $p$, a natural number $k$, a weight function
$W:E\rightarrow F_{p}^{k}$ and a vector $(a_1,...,a_k) \in
F_{p}^{k}$, the problem is to determine, if there is a sequence of
edges $(e_{i_1},...,e_{i_d})$ that form a simple path or a simple
even cycle in graph $G$, such that
$W(e_{i_1})-W(e_{i_2})+...+(-1)^{d+1}W(e_{i_d})= (a_1,...,a_k)$.
Since we can use $(e_{i_1},...,e_{i_d})$ as a clue that would allow
to solve this problem in polynomial time, the problem is NP.
Moreover, we can reduce a general improper weight function problem
to a specific alternating weighted path problem by picking $G, p, k,
W$ as they were in the initial improper weight function problem and
choosing $(a_1,...,a_k) = {\bf 0} \in F_{p}^{k}$, making alternating
weighted path problem an NP-hard problem. Since alternating weighted
path problem is both NP and NP-hard, it's NP-complete.

Despite the general alternating weighted path problem  being
NP-complete, there are special cases of this problem that can be
solved fairly quickly. For example, let $G$ be a complete graph
$K_m$, let $p$ be a large prime, $p>2 \cdot 3^{m-1}$, and let $k =
m$. Let $V = \{v_1,...,v_m\}$ be a set of vertices of graph $G$. For
an edge $e = (v_q,v_r), q<r$, let the weight function $W(e) =
(w_1(e),...,w_m(e))$ be defined the following way: $w_t(e) =0$, if
$t \neq q, t\neq r$, $w_t(e) = 3^{r-q-1}$, if $t=q$, and $w_t(e) =
1$, if $t=r$. The key idea of the solution will be for a given $x
\in F_p$ to either find  an $1\leq i\leq m$ such that $x \equiv
3^{i-1} $(mod $p$) or  $x \equiv p-3^{i-1} $ (mod $p$), or find
$1\leq  i <j\leq m$, such that $x \equiv 3^{j} - 3^{i} $(mod $p$) or
$x \equiv p - 3^{j} + 3^{i} $ (mod $p$). Note that because $p > 2
\cdot 3^{m-1}$, and because for any $1\leq i <j \leq m$ and $r \in
\{0,1,3,...,3^{i-1}\}, s \in \{0,1,3,...,3^{j-1}\}$ $3^{j} - s >
3^{i} - r$, it is impossible to get multiple different answers for
an $x$ as a result of this search. Also note, that since
$|\{3^{i-1}|1\leq i \leq m\}\cup\{p-3^{i-1}|1\leq i \leq
m\}\cup\{3^{j}-3^{i}|1\leq i < j\leq m\}\cup\{p-3^{j}+3^{i}|1\leq i
< j\leq m\}| = m(m+1)$, this search can be done in polynomial time.

For a vector $A = (a_1,...,a_m) \in F_{p}^{k}$,  there is the
following solution to this problem. The solution consists of $m-1$
steps, and the $i$th step ends with either the conclusion, that the
answer is ``No'', or with the set of the edges $(v_i, v_j), i <j$
that belong to a path or an even cycle  $D = (e_{i_1},...,e_{i_d})$,
such that $W(e_{i_1})-W(e_{i_2})+...+(-1)^{d+1}W(e_{i_d})=
(a_1,...,a_m)$. In essense, this algoritm considers vertices of
graph $G$ one by one, from $v_1$ to $v_{m-1}$, tries to determine
for each vertex, if it has adjacent edges that are part of $D$, and
then adjusts the vector of weights, so that weights on edges of
vertices with lower indexes no longer affect the search of edges on
$D$, adjacent to vertices with higher indexes.

Let us describe the first step. If there is a path or an even  cycle
$D = (e_{i_1},...,e_{i_d})$, such that
$W(e_{i_1})-W(e_{i_2})+...+(-1)^{d+1}W(e_{i_d})= (a_1,...,a_m)$,
then there are 3 possible options. The first option is that $a_1 =
0$. In this case $D$ doesn't contain any edges that are adjacent to
$v_1$. The second option is that $a_1 = 3^b, b<m$ or $a_1 \equiv
p-3^{b}$ (mod $p$). In this case, $(v_1, v_{b+2})$ is in $D$ and it
is the only edge in $D$ that is adjacent to $v_1$. The third option
is that $a_1 \equiv 3^{b} - 3^{c}$ (mod $p$), $b<m, c<m$. In this
case, both edges $(v_1, v_{b+2})$ and $(v_1, v_{c+2})$ are in $D$
and they are the only edges in $D$ that are adjacent to $v_1$. If
$a_1$ doesn't fit into any of these three options, then the answer
to the alternating weighted path problem is  ``No''. Otherwise, the
set of edges that are adjacent to $v_1$ and are in the path $D$, is
stored, and the new vector of weights $B_1$ is obtained from $A$ in
the following way. If $a_1 = 0$ then the stored set of edges is
empty and $B_1 = A$. If $a_1 = 3^i$ then the edge $(v_1, v_i)$ is 
stored and $B_1 = A - W((v_1,v_i))$. If $a_1 \equiv p-3^i$ (mod
$p$), then the edge $(v_1, v_i)$ is stored and $B_1 = A +
W((v_1,v_i))$. If $a_1 \equiv 3^i - 3^j$ (mod $p$), then both edges
$(v_1, v_i)$ and $(v_1,v_j)$ are stored and $B_1 = A - W((v_1, v_i))
+ W((v_1,v_j))$. And this point the first step ends.

For the $i$th step the process is the same, except $v_i$ is used
instead of $v_1$, vector $B_{i-1}= (b_1,...,b_m)$, that we obtained
at the end of $(i-1)$th step, is used instead of A, $b_i$ is used
instead of $a_1$, and a new vector $B_i$ is obtained as a result
instead of $B_1$.

If all the steps are completed,  and none of them ended with the
conclusion that the answer is ``No'', then the set of all stored
edges is inspected. If all edges from this set form a simple path or
a simple even cycle $D$, and it fits the condition
$W(e_{i_1})-W(e_{i_2})+...+(-1)^{d+1}W(e_{i_d})= (a_1,...,a_m)$,
then the answer to the problem is  ``Yes'', otherwise the answer is
``No''.

Now let us describe a cryptosystem based on the alternating weighted path problem.
\begin{itemize}
    \item {\bfseries Public Key}: Graph $G(V,E)$, prime number $p$, natural number $k$, and weight function $W:E\rightarrow F_{p}^{k}$.
    \item {\bfseries Private Key}: An invertible linear operator $M: F_{p}^{k} \rightarrow F_{p}^{k}$ that transforms weights on edges, such that an alternating weighted path problem transforms into an easy one.
    \item {\bfseries Message Space}: simple paths and simple even cycles of $G$.
    \item {\bfseries Encryption}: for a path or an even cycle $(e_{i_1},...,e_{i_d})$, compute $(a_1,...,a_k) = W(e_{i_1})-W(e_{i_2})+...+(-1)^{d+1}W(e_{i_d})$.
    \item {\bfseries Decryption}: apply operator $M$ to the received message
    $A = (a_1,...,a_k)$ and solve the easy alternating weighted path problem
    with weight function $W'(e) = M(W(e))$ and the alternating sum of weights $B = M(A)$.
\end{itemize}

The example below shows a public key,  a private key, and a weight
function obtained after using private key.

\begin{figure}[h]
     \centering
     \includegraphics[width=0.65\textwidth]{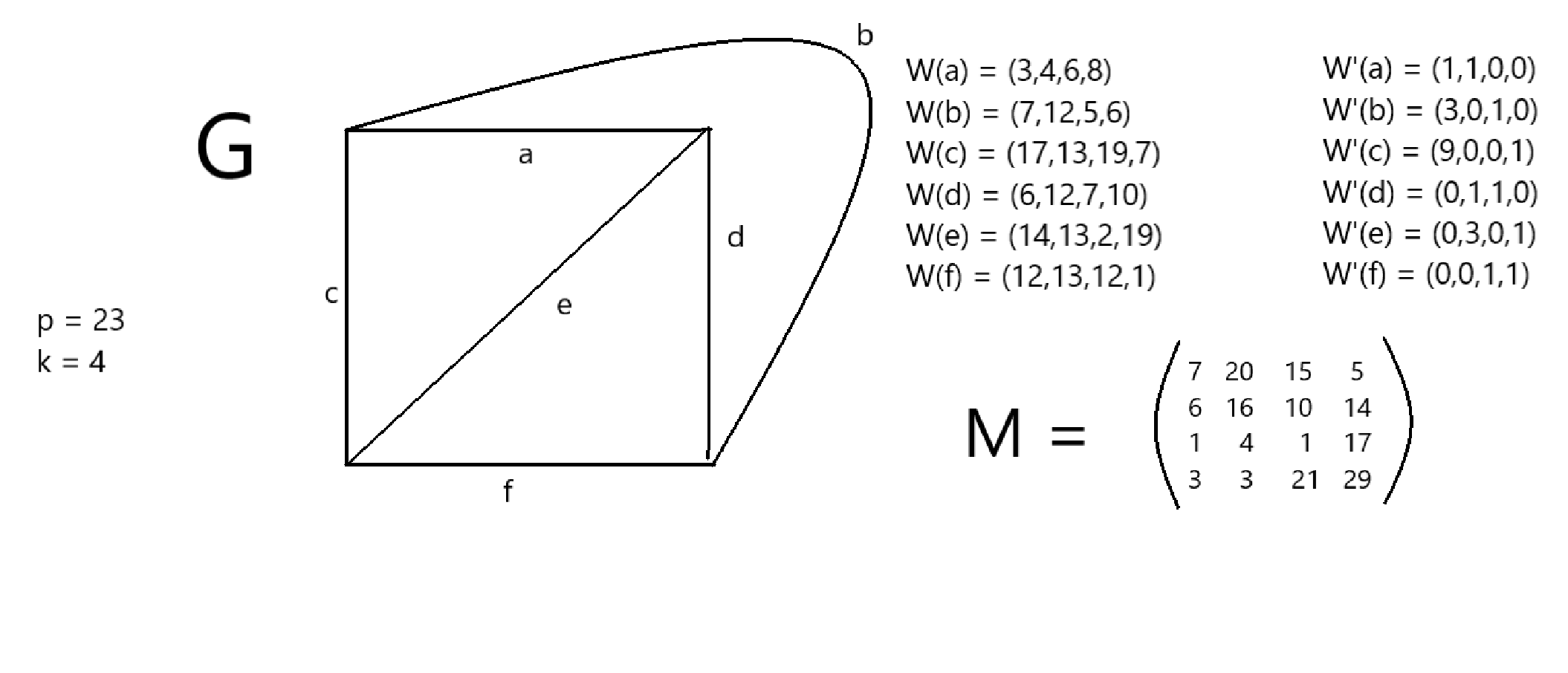}
     \caption{Example}
     \label{fig:my_label}
 \end{figure}

Such a system might require some  additions, so, it can be used in
transmissions of different kinds of messages. For example, a map
from the set of edges to the set of letters from English alphabet
can be added to the public key, so, that the system can be used to
transmit English words. To further explain this idea, let us add a
map $t: t(a) =$ ``A'', $t(b) =$ ``I'', $t(c) = $``N'', $t(d) =$
``K'', $t(e) = $``O'', $t(f) = $``M'' to the example above. If the
sender decides to send the message ``NO'', he chooses the path
$(c,e)$ that corresponds to this message according to the map $t$,
computes $W(c) - W(e) = (3, 0, 17, -12) $ (mod 23), and sends the
message $A = (3, 0, 17, -12)$. The receiver then computes $M(A) =
(9,20,0,0)$ (mod 23). On the first step of the solution of the easy
alternating path problem edge $c$ is stored and vector $(0,-3,0,-1)$
is obtained. On the second step of the solution edge $e$ is stored
and vector $(0,0,0,0)$ is obtained. After that, no more edges will
be added through the easy alternating path problem solution, and
after applying $t$ to the result $(c,e)$, the receiver obtains the
message ``NO''. 

One of the issues here is that the sender can use a
path $(e_{i_1},..,e_{i_d})$ to send either
$(t(e_{i_1}),...,t(e_{i_d}))$ or $(t(e_{i_d}),...,t(e_{i_1}))$. In
case when $(e_{i_1},..,e_{i_d})$, it is easy to draw a distinction
between these cases: if $A = W(e_{i_1}) - ... - W(e_{i_d})$, then
$(t(e_{i_1}),...,t(e_{i_d}))$ was sent, and if $A = W(e_{i_d}) - ...
- W(e_{i_1})$, then  $(t(e_{i_d}),...,t(e_{i_1}))$ was sent. But if
the path had odd length, the encryption process would give the same
result for $(t(e_{i_1}),...,t(e_{i_d}))$ and
$(t(e_{i_d}),...,t(e_{i_1}))$. Cycles of even length have a similar
problem: the encryption process would give the same result for
$(t(e_{i_1}),...,t(e_{i_d}))$ and $(t(e_{i_3}),...,t(e_{i_d}),
t(e_{i_1}), t(e_{i_2}))$.  This issue can be fixed by adding a
condition to the public key, that only messages based on paths of
even length can be sent. Since the path constructed in the proof of
NP-completeness had an even length, the addition of an even path
condition shouldn't make the cryptosystem more vulnerable to
attacks.

The important part of this cryptosystem is the superincreasing
sequence in the easy version of the alternating weighted path
problem. Similar approach is used in the Merkle--Hellman
cryptosystem that is based on the knapsack problem
\cite{merkle-hellman}. In the Merkle--Hellman cryptosystem, the
superincreasing sequence is disguised with a modular multiplication.
This way of disguise is known to be vulnerable to attacks
\cite{shamir} and \cite{br-odl}. In our cryptosystem, the
superincreasing sequence is disguised with a linear operator, which
is a broader method of disguise than the modular multiplication.
Thus there might be a variation of this cryptosystem that is less
vulnerable to attacks than knapsack-based cryptosystems.

\end{document}